\newtheorem{theorem}{Theorem}[section]
\newtheorem*{thm}{Theorem}
\newtheorem{lemma}{Lemma}[section]
\newtheorem{corollary}{Corollary}[section]
\newtheorem{remark}{Remark}[section]
\newtheorem{example}{Example}[section]
\newtheorem*{claim}{Claim}
\numberwithin{equation}{section}
\title{Hartogs type extension theorem for the complement of effective and numerically effective divisors}
\author{S.~V.~Feklistov}
\email{sergeyfe2017@yandex.ru}
\date{}
\begin{document}
\maketitle
\markright{Hartogs extension theorem}
\begin{abstract}
In these notes we generalize the Ohsawa's results on the Hartogs extension phenomenon in the complement of effective divisors in K\"ahler manifolds with semipositive non-flat normal bundle. Namely, we prove that the Hartogs extension phenomenon occurs in the complement of effective and nef divisors with connected supports in K\"ahler manifolds. We use homological algebra methods instead of a construction of the $(n-1)$-convex exhaustion function. Also, the Demailly-Peternell vanishing theorem is a crucial argument for us. Moreover, we obtain geometric characterizations of the Hartogs phenomenon for the complement of basepoint-free divisors.  

Keywords: \keywords{Hartogs phenomenon, K\"ahler manifold, nef divisor, normal bundle, cohomology}

\subjclass[2010]{Primary 32D20, 32Q15, 32C35, 14C20; Secondary 32F10}

\end{abstract}

\section{Introduction}\label{sec1}

Recall that a non-compact complex space $X$ admits the Hartogs phenomenon if for any compact set $K \subset X$ such that $X\setminus K$ is connected, the restriction homomorphism $\Gamma(X, \mathcal{O}_{X}) \to \Gamma (X\setminus K,\mathcal{O}_{X})$ is an isomorphism (here $\mathcal{O}_{X}$ is the sheaf of holomorphic functions). We are interested in the following Ohsawa's result \cite{Ohsawa}. 

\begin{thm}
Let $X$ be a connected compact K\"ahler manifold, let $D$ be an effective divisor on
$X$ , let $\mathcal{O}(D)$ be the line bundle associated to $D$, and let $Z:=Supp(D)$ be the support of $D$. Assume that $\mathcal{O}(D)$ has a fiber metric whose curvature form is semipositive on the
Zariski tangent spaces of $Z$ everywhere and not identically zero at some point of $Z$. Then $X\setminus Z$ admits the Hartogs phenomenon. In particular, $Z$ is connected.
\end{thm}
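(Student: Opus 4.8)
The plan is to recast the Hartogs phenomenon on $\Omega:=X\setminus Z$ as a cohomology vanishing statement, then to transport that vanishing onto the compact manifold $X$ by means of the local‑cohomology exact sequences, so that the positivity of $\mathcal O(D)$ can be fed in through the Demailly--Peternell vanishing theorem. The first ingredient is a homological criterion: $\Omega$ has the Hartogs phenomenon as soon as $\Omega$ is connected and the ``forget supports'' map $H^1_c(\Omega,\mathcal O_\Omega)\to H^1(\Omega,\mathcal O_\Omega)$ is injective. One proves this by the classical $\bar\partial$‑argument — given $f\in\Gamma(\Omega\setminus K,\mathcal O)$ with $\Omega\setminus K$ connected, extend $f$ smoothly to $\Omega$; then $\bar\partial$ of the extension is a $\bar\partial$‑closed $(0,1)$‑form with compact support whose class in $H^1_c(\Omega,\mathcal O_\Omega)$ dies in $H^1(\Omega,\mathcal O_\Omega)$, hence vanishes; solving $\bar\partial$ with compact support and correcting yields a holomorphic extension, which equals $f$ on $\Omega\setminus K$ by the identity theorem. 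Since $X$ connected forces $\Omega$ connected, only the injectivity is at stake; and, since this map factors through $H^1(X,\mathcal O_X)$, the long exact sequence attached to the decomposition $\Omega\cup Z=X$ identifies the kernel of $H^1_c(\Omega,\mathcal O_\Omega)\to H^1(X,\mathcal O_X)$ with $\bigl(\varinjlim_{U\supseteq Z}\Gamma(U,\mathcal O_X)\bigr)/\mathbb C$, the germs of holomorphic functions along $Z$ modulo constants — so the required injectivity will in particular force $Z$ connected, the last assertion of the theorem.

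I would then aim for the stronger statement $H^q(\Omega,\mathcal F|_\Omega)=0$ for all $q\ge n-1$ and all coherent $\mathcal F$ on $X$ (the cohomological shadow of Ohsawa's $(n-1)$‑completeness): by Serre duality on the non‑compact $n$‑fold $\Omega$, together with the vanishing of $H^n$ of coherent sheaves on connected non‑compact $n$‑folds, the case $\mathcal F=K_X$ gives $H^1_c(\Omega,\mathcal O_\Omega)=0$ and hence the injectivity above. Because $Z$ is the support of a divisor, every point of $Z$ has a Stein neighbourhood $V$ with $V\setminus Z$ still Stein, so $R^qj_*(\mathcal F|_\Omega)=0$ for $q\ge1$ (where $j\colon\Omega\hookrightarrow X$) and the Leray spectral sequence collapses to $H^q(\Omega,\mathcal F|_\Omega)=H^q(X,j_*(\mathcal F|_\Omega))$. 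After replacing $D$ by $D_{\mathrm{red}}$ (which changes neither $\Omega$ nor the hypotheses) and using the canonical section $s$ cutting out $Z$, I would filter by the coherent subsheaves $\mathcal F(kD)$ and the exact sequences $0\to\mathcal F((k-1)D)\xrightarrow{\,\cdot s\,}\mathcal F(kD)\to\mathcal F(kD)\otimes\mathcal O_Z\to0$, reducing matters to control of the twisted groups $H^q(X,K_X+kD)$.

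At this point the positivity hypothesis enters. Together with the effectivity of $D$, a fibre metric on $\mathcal O(D)$ whose curvature is semipositive on the Zariski tangent spaces of $Z$ and not identically zero there makes $\mathcal O(D)$ a nef line bundle on $X$ that is not numerically trivial (the non‑flatness clause forces $c_1(\mathcal O(D))|_Z\neq0$), so its numerical dimension $\nu(\mathcal O(D))$ is $\ge1$; geometrically this is exactly the statement that neighbourhoods of $Z$ are pseudoconcave along $Z$, which is why germs of holomorphic functions along $Z$ are constant. Since nefness and numerical dimension are invariant under positive multiples, the Demailly--Peternell vanishing theorem gives $H^q(X,K_X+kD)=0$ for every $q>n-\nu(\mathcal O(D))$ and every $k\ge1$; combined with Serre duality on $\Omega$ and the two preceding steps, this kills the obstruction groups and completes the proof. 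The Kähler hypothesis is used precisely here, since the Demailly--Peternell theorem fails in general without it.

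The delicate point — and the thing the homological approach must accomplish in place of Ohsawa's construction of an $(n-1)$‑convex exhaustion function — is the reduction carried out in the middle step. In the analytic, as opposed to projective‑algebraic, setting the naive identity $H^q(\Omega,\mathcal F|_\Omega)=\varinjlim_k H^q(X,\mathcal F(kD))$ fails, because $j_*(\mathcal F|_\Omega)$ is strictly larger than $\bigcup_k\mathcal F(kD)$ (holomorphic functions on $\Omega$ may have essential, not merely polar, singularities along $Z$); the reduction must therefore be performed by carefully combining the local‑cohomology sequences with the pseudoconcavity of neighbourhoods of $Z$. Relatedly, one must make sure the positivity is used sharply enough: with $\nu(\mathcal O(D))=1$ the Demailly--Peternell theorem by itself only yields $H^n=0$, whereas the range $q\ge n-1$ is what is wanted, so the curvature condition along $Z$ has to be exploited beyond the bare statement ``$\mathcal O(D)$ is nef''. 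Navigating this gap, with the precise numerical bookkeeping, is the heart of the argument.
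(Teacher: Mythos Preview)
Your proposal is not a complete proof, and you yourself flag the essential gaps in the final paragraph. The paper's proof (which is Ohsawa's) hinges on constructing an $(n-1)$-convex exhaustion function on a neighbourhood of each connected component of $Z$ from the given metric; this is what produces the $(n-1)$-concave neighbourhood system, forces holomorphic germs along $Z$ to be constant, and (in particular) yields the connectedness of $Z$. Your plan is to bypass this construction and feed the hypothesis directly into Demailly--Peternell, but two steps do not go through. First, the claim that the hypothesis makes $\mathcal O(D)$ nef on $X$ is unjustified: the curvature is assumed semipositive only on the Zariski tangent spaces of $Z$, with no control in the normal direction or away from $Z$, and in the K\"ahler setting nefness is a statement about the closure of the K\"ahler cone, not merely about curves. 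Second, even granting nefness, the non-flatness clause gives only $\nu(\mathcal O(D))\ge 1$, so Demailly--Peternell yields only $H^n(X,K_X+kD)=0$, i.e.\ $H^0(X,\mathcal O(-kD))=0$, and not the $H^1$-vanishing you need.

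In fact the homological route you sketch is exactly what the paper carries out for its own Theorem~\ref{mainthm}, but there the hypotheses are \emph{changed} to ``$D$ nef, $[D]^2\ne 0$, and $Z$ connected'' precisely so that $\nu\ge 2$ and Demailly--Peternell delivers $H^0=H^1=0$ directly. Your proposal is effectively trying to prove that theorem under Ohsawa's weaker curvature-along-$Z$ hypothesis, and the missing bridge --- extracting global nefness with $\nu\ge 2$, or the pseudoconcavity of neighbourhoods of $Z$, from positivity of the normal bundle alone --- is exactly the content of Ohsawa's exhaustion-function construction that you set out to avoid. Your filtration argument has the same circularity: you correctly note that $j_*(\mathcal F|_\Omega)\ne\bigcup_k\mathcal F(kD)$ in the analytic category, but then appeal to ``pseudoconcavity of neighbourhoods of $Z$'' to repair it, which is again Ohsawa's conclusion rather than an independent input.
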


The first main step of the proof of this theorem is the construction of $(n-1)$-convex exhaustion functions that implies the existence of an $(n-1)$-concave neighborhood system of some connected component of $Z$ (\cite[Proposition 1.2]{Ohsawa}). Secondly, there exist no nonconstant holomorphic functions on any connected neighbourhood of any connected component of $Z$ (\cite[Proposition 1.5]{Ohsawa}). Further, the vanishing theorem (for instance, Demailly-Peternell theorem \cite{Dem1} or $L^{2}$-vanishing theorem) and $\bar{\partial}$-technique implies the assertion of the Theorem.  

Note that in this paper we initially assume that $Z$ is connected and we don't use the argument with a construction of $(n-1)$-convex exhaustion functions. 

In section \ref{sec2} we prove a cohomological criterion of Hartogs phenomenon using classical homological algebra methods (Lemma \ref{mainlemma}). In particular, if $D$ is an effective divisor on a compact complex manifold $X$ with connected support $Z$, then $X\setminus Z$ admits the Hartogs phenomenon if and only if the ideal sheaf $\mathcal{O}(-D)$ of $D$ has no sections near $Z$ (Corollary \ref{crucialcor}).

In section \ref{sec3} we prove the main result (Theorem \ref{mainthm}). Namely, if $D$ is an effective and numerically effective divisor with connected support $Z$ on a compact K\"ahler manifold and $[D]^{2}\neq 0$ in $H^{2,2}(X,\mathbb{R})$ then $X\setminus Z$ admits the Hartogs phenomenon. The Demailly-Peternell vanishing theorem is a crucial for the proof. 

In section \ref{sec4} we consider the case of basepoint-free divisors on compact K\"ahler manifolds. Including Ohsawa's results, we obtain geometrical characterizations of the Hartogs phenomenon (Theorem \ref{basepointthm}). Also, we have a natural question: is it true in more general situations? (see Remarks \ref{rem1} and \ref{rem2}).

In section \ref{sec5} we consider an example in toric geometry context. 

\textit{Acknowledgment}. I am grateful to the referees for their careful reading of the manuscript and useful comments. This work is supported by the Krasnoyarsk Mathematical Center and financed by the Ministry of Science and Higher Education of the Russian Federation (Agreement No. 075-02-2024-1429).

\section{Main lemma and corollaries}\label{sec2}

Let $X$ be a connected compact complex manifold, $\mathcal{O}_{X}$ be the structure sheaf (we often omit the subscript and write $\mathcal{O}$), $Z$ be a connected analytic set of codimension 1, $j\colon Z\hookrightarrow X, i\colon X\setminus Z \hookrightarrow X $ be the canonical embeddings. Let $i^{-1}$ and $j^{-1}$ are the (topological) inverse image functors corresponding to $i$ and $j$. Let $\mathcal{F}$ be a sheaf of $\mathbb{C}$-vector spaces (not necessary coherent) such that $i^{-1}\mathcal{F}\cong i^{-1}\mathcal{O}_{X}=\mathcal{O}_{X\setminus Z}$ (as sheaves of $\mathbb{C}$-vector spaces) and $\dim_{\mathbb{C}} H^{1}(X,\mathcal{F})=m <\infty$. We obtain the following lemma. 

\begin{lemma}\label{mainlemma}
The following assertions are equivalent:
\begin{enumerate}
\item $X\setminus Z$ admits the Hartogs phenomenon;
\item The canonical homomorphism $\Gamma(X,\mathcal{F})\to \Gamma(Z,j^{-1}\mathcal{F})=\varinjlim\limits_{U\supset Z}\Gamma(U,\mathcal{F})$ is epimorphic (where the colimit is taking over open neighbourhoods of $Z$.);
\item $\dim H^{1}_{c}(X\setminus Z,\mathcal{O})\leq m$.
\end{enumerate}
\end{lemma}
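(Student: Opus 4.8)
\emph{Proof plan.} The plan is to connect all three statements to a single short exact sequence of sheaves on $X$ together with one cofinality argument; write $U:=X\setminus Z$, so $i\colon U\hookrightarrow X$ is an open embedding and $j\colon Z\hookrightarrow X$ a closed one. First I would record the easy inputs: $H^0_c(U,\mathcal O)=0$ since $U$ is connected and non-compact; $H^\bullet(X,i_!\mathcal G)\cong H^\bullet_c(U,\mathcal G|_U)$ for any sheaf $\mathcal G$ because $X$ is compact; $H^\bullet(X,j_*\mathcal K)\cong H^\bullet(Z,\mathcal K|_Z)$ because $j$ is a closed embedding; and $\Gamma(Z,j^{-1}\mathcal F)=\varinjlim_{V\supset Z}\Gamma(V,\mathcal F)$ by definition. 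Then, from the short exact sequence
\[
0\longrightarrow i_!\,i^{-1}\mathcal F\longrightarrow \mathcal F\longrightarrow j_*\,j^{-1}\mathcal F\longrightarrow 0
\]
(immediate on stalks) and the isomorphism $i^{-1}\mathcal F\cong\mathcal O_U$, the long exact cohomology sequence yields
\[
0\to\Gamma(X,\mathcal F)\xrightarrow{\ \alpha\ }\Gamma(Z,j^{-1}\mathcal F)\xrightarrow{\ \delta\ }H^1_c(U,\mathcal O)\xrightarrow{\ \phi\ }H^1(X,\mathcal F).
\]

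From this sequence I would get $(1)\Rightarrow(2)\Rightarrow(3)$. By exactness, $(2)$ (that $\alpha$ is onto) is equivalent to $\delta=0$, hence to injectivity of $\phi$, which forces $\dim H^1_c(U,\mathcal O)\le\dim H^1(X,\mathcal F)=m$, i.e. $(3)$. For $(1)\Rightarrow(2)$ the idea is a gluing argument: the connected open neighbourhoods of $Z$ are cofinal (as $X$ is locally connected) and for such $V$ the set $V\setminus Z$ is connected (codimension one); given $s\in\Gamma(V,\mathcal F)$ with $V\supset Z$ connected, put $K:=X\setminus V$, a compact subset of $U$ with $U\setminus K=V\setminus Z$ connected, so $f:=s|_{V\setminus Z}$ is a holomorphic function on $U\setminus K$, which by the Hartogs phenomenon extends to $\tilde f\in\Gamma(U,\mathcal O)$; since $\tilde f$ and $s$ agree on $V\cap U$ and $\{V,U\}$ covers $X$, they glue to a global section of $\mathcal F$ restricting to $s$, proving $\alpha$ is onto.

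The remaining implication $(3)\Rightarrow(1)$ is the heart. I would first reformulate $(1)$: the Hartogs phenomenon for $U$ is equivalent to surjectivity of the restriction $\Gamma(U,\mathcal O)\to\varinjlim_{V\supset Z}\Gamma(V\setminus Z,\mathcal O)$ (injectivity is automatic; the reduction to the cofinal family $K=X\setminus V$ again uses the identity theorem on the connected sets $U\setminus K$). Calling $C$ the cokernel of this map, the cohomology-with-supports sequences of the pairs $(U,X\setminus V)$ — using $H^0_{X\setminus V}(U,\mathcal O)=0$ — identify each $\operatorname{coker}(\Gamma(U,\mathcal O)\to\Gamma(V\setminus Z,\mathcal O))$ with $\ker(H^1_{X\setminus V}(U,\mathcal O)\to H^1(U,\mathcal O))$, and passing to the colimit (filtered colimits are exact, and $\varinjlim_V H^1_{X\setminus V}(U,\mathcal O)=H^1_c(U,\mathcal O)$) gives an embedding $C\hookrightarrow H^1_c(U,\mathcal O)$. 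So $(3)$ makes $C$ finite-dimensional, and what is left is to show $C=0$. I expect this last step to be the main obstacle: it is the only place where the structure of the codimension-one set $Z$ is used essentially, and the point is a dichotomy — $C$ is either zero or infinite-dimensional over $\mathbb C$ — proved via the local structure of holomorphic functions near $Z$ (Laurent expansions in a coordinate transverse to $Z$; for $\mathcal F=\mathcal O_X$ one can phrase it as: the algebra $\varinjlim_V\Gamma(V,\mathcal O_X)$ of holomorphic germs along the connected set $Z$ is $\mathbb C$ or infinite-dimensional, since a nonconstant such germ is transcendental over $\mathbb C$). Granting the dichotomy, finiteness forces $C=0$, i.e. $(1)$, closing the cycle $(1)\Rightarrow(2)\Rightarrow(3)\Rightarrow(1)$.
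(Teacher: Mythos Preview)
Your framework is essentially the same as the paper's: the short exact sequence $0\to i_!i^{-1}\mathcal F\to\mathcal F\to j_*j^{-1}\mathcal F\to 0$, the identification $H^\bullet(X,i_!\mathcal O_U)\cong H^\bullet_c(U,\mathcal O)$, and the resulting four-term sequence. Your arguments for $(1)\Rightarrow(2)$ and $(2)\Rightarrow(3)$ are correct and amount to what the paper calls ``clear via diagram chasing'' (your gluing argument for $(1)\Rightarrow(2)$ is exactly the statement that the left square in the paper's diagram is Cartesian).

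The genuine gap is in $(3)\Rightarrow(1)$. You correctly isolate the problem as showing that the cokernel $C$ of $\Gamma(U,\mathcal O)\to\varinjlim_V\Gamma(V\setminus Z,\mathcal O)$ satisfies a zero/infinite dichotomy, but your proposed mechanism --- Laurent expansions in a coordinate transverse to $Z$ --- is aimed at the wrong obstruction. A class $[f]\in C$ is represented by a holomorphic function on $V\setminus Z\subset U$ that fails to extend \emph{inside $U$}, i.e.\ across the compact set $K=X\setminus V$; the local behaviour near $Z$ is irrelevant, since we never ask $f$ to cross $Z$. Likewise your parenthetical about $\varinjlim_V\Gamma(V,\mathcal O_X)$ being $\mathbb C$ or infinite-dimensional, while true, concerns a different space and does not control $C$. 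Concretely, to run your dichotomy you would need: if $P(f)$ extends to $U$ for some nonconstant $P\in\mathbb C[T]$, then $f$ itself extends. This is not a formality and does not follow from local expansions.

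The paper closes this gap by quoting the Andreotti--Hill argument. Assume $\dim H^1_c(U,\mathcal O)\le m$ and take $f$ nonconstant with $c(f)\neq 0$. Linear dependence of $c(f),\dots,c(f^{m+1})$ gives $P\in\mathbb C[T]$ with $r(H)=P(f)$ for some $H\in\Gamma(U,\mathcal O)$. A second application to $c(f),c(r(H)f),\dots,c(r(H)^m f)$ produces $F,G\in\Gamma(U,\mathcal O)$ with $r(F)=r(G)f$ and $G\not\equiv 0$. Then $G^{m+1}H=G^{m+1}P(F/G)$ on $U$, so $H=P(F/G)$ off $\{G=0\}$; hence $F/G$ is locally bounded there, and the Riemann extension theorem across the hypersurface $\{G=0\}$ yields $F/G\in\Gamma(U,\mathcal O)$ with $r(F/G)=f$. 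This algebraic two-step plus Riemann extension is exactly the missing ingredient your sketch needs; once you insert it, your proof and the paper's coincide.
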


\begin{proof}
Recall the following basic facts (see, for instance, \cite{Shapira} or \cite{Manin}): 
\begin{enumerate}
\item For any sheaf $\mathcal{F}$ we have the following short exact sequence $$0\to i_{!}j^{-1}\mathcal{F}\to\mathcal{F}\to j_{*}j^{-1}\mathcal{F}\to 0,$$ where $i_{!}$ is the extension by zero functor, $j_{*}$ is the direct image functor.
\item For any injective sheaf $\mathcal{I}$ we have the following short exact sequence: $$0\to \Gamma_{K}(\mathcal{I})\to\mathcal{I}\to (i_{K})_{*}i_{K}^{-1}\mathcal{I}\to 0,$$ where $i_{K}\colon X\setminus K \hookrightarrow X$ is the canonical open embedding, $\Gamma_{K}(\mathcal{I})$ is the sheaf $U\rightsquigarrow\Gamma_{K}(U,\mathcal{I}):=\{f\in\Gamma(U,\mathcal{I})\mid \text{support of } f \text{ in } K\cap U\}$. 
\item For any sheaf $\mathcal{F}$ we have the canonical isomorphism $$H^{*}_{c}(X,\mathcal{F})\cong\varinjlim\limits_{K} H^{*}_{K}(X,\mathcal{F}),$$ where the colimit is taking over compact sets $K\subset X$.
\end{enumerate} 

It follows the following commutative diagram (here $Y:=X\setminus Z$): \[\tiny
\begin{diagram} 
\node{\Gamma(X,\mathcal{F})} \arrow{e,t}{} \arrow{s,r}{}
\node{\Gamma(Z,j^{-1}\mathcal{F})} \arrow{e,t}{} \arrow{s,r}{}
\node{H^{1}_{c}(Y,\mathcal{O})} \arrow{s,r}{=} \arrow{e,t}{} \node{H^{1}(X,\mathcal{F})}\arrow{s,r}{}
\\
\node{\Gamma(Y,\mathcal{O})} \arrow{e,t}{r} 
\node{\varinjlim\limits_{K\subset Y}\Gamma(Y\setminus K,\mathcal{O})} \arrow{e,t}{c}
\node{H^{1}_{c}(Y,\mathcal{O})} \arrow{e,t}{}\node{H^{1}(Y,\mathcal{O})}  
\end{diagram}\]

Note that since $Z$ and $Y$ are connected, $Y$ admits the Hartogs phenomenon if and only if the canonical homomorphism $r$ is isomorphic. Also, the first square of the diagram is Cartesian because $i^{-1}\mathcal{F}\cong \mathcal{O}_{Y}$.  

The implications $1\Rightarrow 2\Rightarrow 3$ is clear via diagram chasing. 

The implication $3\Rightarrow 1$ follows as in the proof of \cite[corollary 4.3]{AndrHill}. Recall this argument. 

We may assume that $\varinjlim\limits_{K\subset Y}\Gamma(Y\setminus K,\mathcal{O})\neq \mathbb{C}$. Consider an equivalence class $f\in \varinjlim\limits_{K\subset Y}\Gamma(Y\setminus K,\mathcal{O})$ of a non-constant holomorphic function on $Y\setminus K$. Suppose that $c(f)\neq 0$. We may assume that $c(f^{i})$ are non-zero for any $1\leq i\leq m+1$. So, the elements $c(f), c(f^{2}),\cdots, c(f^{m+1})$ are non-zero and linearly dependent. This means that there exists a non-zero polynomial $P\in \mathbb{C}[T]$ of degree $m+1$ such that $c(P(f))=0$. It follows that there is a non-constant holomorphic function $H\in\Gamma(Y,\mathcal{O})$ such that $r(H)=P(f)$. 

Now the elements $ c(f),c(r(H)f),\cdots, c(r(H)^{m}f)$ are non-zero and linearly dependent. Hence there exists a polynomial $P_{1}\in \mathbb{C}[T]$ such that $c(P_{1}(r(H))f)=0$. It follows that there exists a holomorphic function $F\in\Gamma(Y,\mathcal{O})$ such that $r(F)=P_{1}(r(H))f$. Denoting $G=P_{1}(H)$, we obtain $r(F)=r(G)f$. 

Since $r(G^{m+1}H)=r(G^{m+1}P(F/G))$, it follows that $G^{m+1}H=G^{m+1}P(F/G)$. Hence, we obtain $H=P(F/G)$ on $Y\setminus \{G=0\}$. It follows that $F/G\in \Gamma(Y\setminus \{G=0\}, \mathcal{O})$ and locally bounded on $Y\setminus \{G=0\}$. Since $G\neq 0$, the Riemann extension theorem implies that $F/G\in \Gamma(Y,\mathcal{O})$ and $r(F/G)=f$. Therefore, the canonical homomorphism $r$ is isomorphic. 
\end{proof}

Taking $\mathcal{F}=\mathcal{O}_{X}$ we obtain the following corollary. 

\begin{corollary}
$X\setminus Z$ admits the Hartogs phenomenon if and only if $$\Gamma(Z, j^{-1}\mathcal{O})=\mathbb{C}.$$
\end{corollary}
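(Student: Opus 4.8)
The plan is to derive this corollary directly from Lemma \ref{mainlemma} by specializing to $\mathcal{F} = \mathcal{O}_X$. First I would note that with this choice the hypotheses of the lemma are trivially satisfied: certainly $i^{-1}\mathcal{O}_X = \mathcal{O}_{X\setminus Z}$ as sheaves of $\mathbb{C}$-vector spaces (indeed as sheaves of rings), and $H^1(X,\mathcal{O}_X)$ is finite-dimensional because $X$ is a compact complex manifold and $\mathcal{O}_X$ is coherent, so we may set $m := \dim_{\mathbb{C}} H^1(X,\mathcal{O}_X) < \infty$. Thus all three equivalent conditions of the lemma apply.

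Next I would invoke the equivalence of (1) and (2) from Lemma \ref{mainlemma}: $X\setminus Z$ admits the Hartogs phenomenon if and only if the canonical map $\Gamma(X,\mathcal{O}_X) \to \Gamma(Z, j^{-1}\mathcal{O}_X) = \varinjlim_{U\supset Z} \Gamma(U,\mathcal{O}_X)$ is surjective. Since $X$ is compact and connected, $\Gamma(X,\mathcal{O}_X) = \mathbb{C}$, and this map is the composition of the inclusion of constants into $\Gamma(Z, j^{-1}\mathcal{O}_X)$; in particular it is always injective. Hence surjectivity of this map is equivalent to the statement that $\Gamma(Z, j^{-1}\mathcal{O}_X)$ consists only of the constants, i.e. $\Gamma(Z, j^{-1}\mathcal{O}_X) = \mathbb{C}$. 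Putting the two equivalences together yields the claim.

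I do not expect a serious obstacle here, since the corollary is essentially a reformulation of the lemma once one observes that the relevant global sections on the compact connected $X$ reduce to $\mathbb{C}$. The one point that deserves a word of care is the identification $\Gamma(Z, j^{-1}\mathcal{O}_X) = \varinjlim_{U\supset Z}\Gamma(U,\mathcal{O}_X)$ and the fact that the canonical map from $\Gamma(X,\mathcal{O}_X)$ into it sends the constant $1$ to the germ of the constant function $1$ near $Z$; this is exactly the horizontal arrow in the top row of the diagram in the proof of Lemma \ref{mainlemma}, so no new argument is needed. One should also remark that the colimit is filtered (the neighbourhoods of the connected set $Z$ form a directed system under reverse inclusion), so that a section near $Z$ is genuinely represented by a holomorphic function on some connected open $U \supset Z$; connectedness of $Z$ then makes "constant near $Z$" an unambiguous notion, matching the $\mathbb{C}$ on the right-hand side.
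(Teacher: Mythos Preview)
Your proposal is correct and follows exactly the paper's approach: the paper derives the corollary by the single sentence ``Taking $\mathcal{F}=\mathcal{O}_{X}$ we obtain the following corollary,'' and you have simply spelled out the verification of the hypotheses and the observation $\Gamma(X,\mathcal{O}_X)=\mathbb{C}$ that makes surjectivity in condition (2) equivalent to $\Gamma(Z,j^{-1}\mathcal{O})=\mathbb{C}$.
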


\begin{example}
Let $X$ be a compact smooth complex surface, $Z\subset X$ be a smooth connected complex curve, $N_{Z}$ be the normal bundle of $Z$. The following table is based on the results of \cite{Suzuki}.

\begin{tabular}[t]{|p{4em}|p{11em}|p{5em}|p{10em}|}
\hline
 $c_{1}(N_{Z})$ & the Remmert reduction of $\varepsilon$-neighbourhood of $Z$: & $\Gamma(Z,j^{-1}\mathcal{O})$ & Hartogs phenomenon in $X\setminus Z$\\
\hline
$<0$ & 2-dimensional
normal Stein space & $\neq \mathbb{C}$ & no\\
\hline
$>0$ & point & $=\mathbb{C}$ & yes\\
\hline
$=0$ & point & $=\mathbb{C}$ & yes\\ \cline{2-4} & unit disc in $\mathbb{C}$ & $\neq \mathbb{C}$ &no \\
\hline
\end{tabular}
\end{example}

Now let $D$ be an effective divisor on $X$ such that the support of $D$ is $Z$ (i.e. $Supp(D)=Z$). Taking $\mathcal{F}=\mathcal{O}_{X}(-D)$ (the ideal sheaf of $D$) we obtain the following corollary which is crucial for us. 

\begin{corollary}\label{crucialcor}
$X\setminus Z$ admits the Hartogs phenomenon if and only if there exists an effective divisor $D$ of $X$ whose support coincides with $Z$ such that $$\Gamma(Z, j^{-1}\mathcal{O}(-D))=0.$$
\end{corollary}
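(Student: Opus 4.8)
The plan is to deduce this corollary directly from Lemma \ref{mainlemma} and its immediate consequence, the preceding corollary, by specializing the sheaf $\mathcal{F}$ appropriately. First I would observe that for an effective divisor $D$ with $Supp(D)=Z$, the ideal sheaf $\mathcal{O}_{X}(-D)$ is a sheaf of $\mathbb{C}$-vector spaces with $i^{-1}\mathcal{O}_{X}(-D)\cong\mathcal{O}_{X\setminus Z}$, since $\mathcal{O}_{X}(-D)$ agrees with $\mathcal{O}_{X}$ away from $Z$ (the divisor $D$ being supported on $Z$, multiplication by a local defining section of $D$ gives an isomorphism $\mathcal{O}_{X}(-D)|_{X\setminus Z}\xrightarrow{\sim}\mathcal{O}_{X}|_{X\setminus Z}$). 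Moreover $H^{1}(X,\mathcal{O}_{X}(-D))$ is finite-dimensional because $\mathcal{O}_{X}(-D)$ is a coherent sheaf on the compact manifold $X$. Hence Lemma \ref{mainlemma} applies with $\mathcal{F}=\mathcal{O}_{X}(-D)$ and $m=\dim_{\mathbb{C}}H^{1}(X,\mathcal{O}_{X}(-D))$.

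Next I would use the equivalence (1)$\Leftrightarrow$(2) of Lemma \ref{mainlemma} for two choices of $\mathcal{F}$ simultaneously. For $\mathcal{F}=\mathcal{O}_{X}$ the preceding corollary states that $X\setminus Z$ admits the Hartogs phenomenon if and only if $\Gamma(Z,j^{-1}\mathcal{O})=\mathbb{C}$. Thus it suffices to show: $\Gamma(Z,j^{-1}\mathcal{O})=\mathbb{C}$ if and only if there exists an effective divisor $D$ with $Supp(D)=Z$ and $\Gamma(Z,j^{-1}\mathcal{O}(-D))=0$. For the backward direction, apply Lemma \ref{mainlemma} with $\mathcal{F}=\mathcal{O}_{X}(-D)$: the condition $\Gamma(Z,j^{-1}\mathcal{O}(-D))=0$ is condition (2) for this $\mathcal{F}$ (the map $\Gamma(X,\mathcal{O}(-D))\to 0$ is trivially epimorphic), so by the Lemma $X\setminus Z$ admits the Hartogs phenomenon, whence $\Gamma(Z,j^{-1}\mathcal{O})=\mathbb{C}$ by the previous corollary.

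For the forward direction, suppose $X\setminus Z$ admits the Hartogs phenomenon, and fix any effective divisor $D$ with $Supp(D)=Z$ (for instance the reduced divisor on $Z$, if $Z$ is reduced, or simply take $D$ to be the divisor cut out by a local generator; existence of such $D$ globally is automatic since $Z$ is an analytic hypersurface, but in our setting we are anyway free to choose $D$). By Lemma \ref{mainlemma} applied to $\mathcal{F}=\mathcal{O}_{X}(-D)$, since $X\setminus Z$ admits the Hartogs phenomenon, condition (2) holds: the map $\Gamma(X,\mathcal{O}(-D))\to\Gamma(Z,j^{-1}\mathcal{O}(-D))$ is epimorphic. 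Now $\Gamma(X,\mathcal{O}(-D))=0$: a global section of the ideal sheaf of the effective divisor $D\neq 0$ on the connected compact manifold $X$ is a holomorphic function vanishing on $Z$, hence identically zero by the maximum principle (or because nonconstant holomorphic functions do not exist on compact connected $X$, and the only constant in the ideal is $0$). Therefore $\Gamma(Z,j^{-1}\mathcal{O}(-D))=0$, which gives the desired divisor.

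I expect the main subtlety to be the verification that $i^{-1}\mathcal{O}_{X}(-D)\cong\mathcal{O}_{X\setminus Z}$ as sheaves of $\mathbb{C}$-vector spaces — one must be careful that this isomorphism is only $\mathbb{C}$-linear (it is given by multiplication by a section of $\mathcal{O}(D)$, not $\mathcal{O}_{X\setminus Z}$-linear in a canonical way), but this is exactly the hypothesis format that Lemma \ref{mainlemma} was designed to accommodate, so no difficulty arises. A second point worth stating explicitly is that the choice of $D$ in the statement is existential, so for the forward implication any effective divisor supported on $Z$ will do; there is no need to track dependence on $D$. The rest is a routine two-way application of the Lemma together with the vanishing $\Gamma(X,\mathcal{O}(-D))=0$.
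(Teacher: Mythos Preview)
Your proposal is correct and follows the same approach as the paper, which simply says to take $\mathcal{F}=\mathcal{O}_{X}(-D)$ in Lemma~\ref{mainlemma}; you have merely spelled out the details that the paper leaves implicit, in particular the key observation that $\Gamma(X,\mathcal{O}(-D))=0$ since $X$ is compact connected, which turns condition~(2) of the lemma into the vanishing $\Gamma(Z,j^{-1}\mathcal{O}(-D))=0$. One small remark: since the paper takes $\mathcal{O}(-D)$ to be the ideal sheaf (a subsheaf of $\mathcal{O}_X$), the restriction $i^{-1}\mathcal{O}(-D)$ is literally equal to $\mathcal{O}_{X\setminus Z}$ as $\mathcal{O}$-modules (the local generators become units off $Z$), so your caveat about the isomorphism being ``only $\mathbb{C}$-linear'' is unnecessary here.
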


\section{Hartogs extension theorem}\label{sec3}

In this section we proved the following Hartogs type extension theorem. 
\begin{theorem}\label{mainthm}
Let $X$ be a compact K\"ahler manifold, $D$ be an effective and numerically effective divisor such that $Z=Supp(D)$ is connected and $[D]^{2}\neq 0$ in $H^{2,2}(X,\mathbb{R})$. Then $X\setminus Z$ admits the Hartogs phenomenon. In particular, we obtain $H^{1}_{c}(X\setminus Z,\mathcal{O})=0$ and $\Gamma (Z,j^{-1}\mathcal{O})=\mathbb{C}$.
\end{theorem}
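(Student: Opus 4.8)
The plan is to use Corollary \ref{crucialcor}: it suffices to exhibit an effective divisor with support $Z$ whose ideal sheaf has no nonzero sections on any neighbourhood of $Z$. The natural candidate is $D$ itself (or a large multiple $kD$), so the goal becomes showing $\varinjlim_{U\supset Z}\Gamma(U,\mathcal{O}(-D))=0$, equivalently $\Gamma(Z,j^{-1}\mathcal{O}(-D))=0$. By the formal/analytic comparison near the reduced support $Z$, a section of $\mathcal{O}(-D)$ on a neighbourhood of $Z$ is determined by its restrictions to the infinitesimal neighbourhoods $Z_k$, i.e.\ by compatible sections of $\mathcal{O}(-D)\otimes\mathcal{O}_X/\mathcal{O}(-kD)$ for all $k$; so the first reduction is to control $\Gamma(X,\mathcal{O}(-D)\otimes\mathcal{O}_{kD})$ or, dually, to control the cohomology of the thickenings via the exact sequences
\begin{equation*}
0\to \mathcal{O}(-(k+1)D)\to \mathcal{O}(-kD)\to \mathcal{O}(-kD)|_{D}\to 0.
\end{equation*}

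The key input is the Demailly--Peternell vanishing theorem (\cite{Dem1}): on a compact K\"ahler manifold, for a nef line bundle $L$ one has $H^q(X,K_X\otimes L)=0$ for $q>n-\nu(L)$, where $\nu(L)$ is the numerical dimension; more relevantly here, $H^q(X,\mathcal{O}(-L))=0$ for $q<\nu(L)$ after Serre duality, and the hypothesis $[D]^2\neq 0$ in $H^{2,2}(X,\mathbb{R})$ guarantees $\nu(\mathcal{O}(D))\geq 2$. Thus $H^0(X,\mathcal{O}(-D))=H^1(X,\mathcal{O}(-D))=0$, and the same for all multiples $\mathcal{O}(-kD)$ since $kD$ is again nef with numerical dimension $\geq 2$. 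Feeding this into the long exact sequences of the thickening filtration, I would show inductively that $\Gamma(X,\mathcal{O}(-D)\otimes\mathcal{O}_{kD})=0$ for every $k$: the $H^0$ of the quotient injects into the $H^0$ of a sheaf supported on $D$ built from $\mathcal{O}(-jD)|_D$, and one needs that these have no global sections either — this is where connectedness of $Z$ and the effectivity/nef-ness on $Z$ enter, controlling the normal-bundle-type contributions $N_{D}^{*\otimes j}$ along the (possibly singular, non-reduced) divisor.

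Passing from the vanishing of $\Gamma(X,\mathcal{O}(-D)\otimes\mathcal{O}_{kD})$ for all $k$ to the vanishing of the colimit $\Gamma(Z,j^{-1}\mathcal{O}(-D))$ is the next step: a germ of section near $Z$ has a well-defined image in each finite thickening, and if all these vanish while the section itself is nonzero one derives a contradiction with the identity theorem / Krull-type intersection statement for coherent sheaves near the compact set $Z$ (the section would be flat to infinite order along $Z$, hence zero on a neighbourhood of each point of $Z$, hence zero near $Z$ by connectedness). Once $\Gamma(Z,j^{-1}\mathcal{O}(-D))=0$ is established, Corollary \ref{crucialcor} gives the Hartogs phenomenon for $X\setminus Z$, and then Lemma \ref{mainlemma} (equivalence of (1) and (3)) together with $H^1(X,\mathcal{O})$ being finite-dimensional — and in fact the refined statement that $m$ can be taken to be $0$ here because $\mathcal{F}=\mathcal{O}(-D)$ already has $H^1=0$ — yields $H^1_c(X\setminus Z,\mathcal{O})=0$, and the corollary after Lemma \ref{mainlemma} gives $\Gamma(Z,j^{-1}\mathcal{O})=\mathbb{C}$.

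The main obstacle I anticipate is the middle step: extracting vanishing of the sections on \emph{all} infinitesimal neighbourhoods from Demailly--Peternell, because the theorem is a statement about $X$ while I need information about the non-reduced thickenings $kD$ and their restrictions to $D$, whose positivity I only know numerically (via $[D]^2\neq 0$) rather than via a smooth metric with the precise curvature sign Ohsawa assumed. Handling the case where $Z$ is singular or $D$ non-reduced — so that $\mathcal{O}(-kD)|_D$ is not simply a power of a line bundle on a smooth variety — will require care, presumably by filtering $\mathcal{O}_{kD}$ by the ideals $\mathcal{O}(-jD)$ and applying the vanishing on $X$ repeatedly rather than trying to work intrinsically on $D$; the connectedness hypothesis on $Z$ is what prevents the appearance of locally constant but globally nontrivial sections at the bottom of this filtration.
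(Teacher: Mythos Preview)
Your overall strategy is correct and coincides with the paper's: both reduce to Corollary~\ref{crucialcor} via the Demailly--Peternell vanishing $H^{0}(X,\mathcal{O}(-mD))=H^{1}(X,\mathcal{O}(-mD))=0$ for all $m\geq 1$, and both conclude by observing that a nonzero germ $f$ near $Z$ with $\operatorname{div}(f)\geq mD$ for every $m$ is impossible.

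The paper's execution is more direct than your thickening route, and in particular your ``main obstacle'' does not arise. Instead of analysing the graded pieces $\mathcal{O}(-kD)|_{D}$ or invoking any formal/analytic comparison, the paper simply applies the long exact sequence underlying Lemma~\ref{mainlemma} with $\mathcal{F}=\mathcal{O}(-mD)$: since $H^{0}$ and $H^{1}$ of $\mathcal{O}(-mD)$ both vanish, the connecting map gives an isomorphism $\Gamma(Z,j^{-1}\mathcal{O}(-mD))\xrightarrow{\ \sim\ }H^{1}_{c}(X\setminus Z,\mathcal{O})$ for every $m$, and these isomorphisms are compatible with the inclusions $\mathcal{O}(-mD)\hookrightarrow\mathcal{O}(-D)$ (the map on $H^{1}_{c}$ is the identity). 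Hence any $f\in\Gamma(Z,j^{-1}\mathcal{O}(-D))$ already lies in the image of $\Gamma(Z,j^{-1}\mathcal{O}(-mD))$ for all $m$, i.e.\ $\operatorname{div}(f)\geq mD$ near $Z$ for all $m$, and one is done. Your approach reaches the same point once you notice that $\Gamma(X,\mathcal{O}(-D)/\mathcal{O}(-kD))=0$ follows \emph{immediately} from the long exact sequence on $X$ (using only $H^{0}(X,\mathcal{O}(-D))=0$ and $H^{1}(X,\mathcal{O}(-kD))=0$), so no control of the normal bundle, no intrinsic work on the possibly singular $D$, and no use of connectedness is needed at that step; connectedness of $Z$ enters only through Corollary~\ref{crucialcor} and the final identity-theorem argument.
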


\begin{proof}
Note that for any $m\in \mathbb{Z}_{>0}$ we obtain that $mD$ is again effective, numerically effective and $[mD]^{2}\neq 0$. By the Demailly-Peternell vanishing theorem \cite{Dem1}, $H^{i}(X,\mathcal{O}(-mD))=0$ for $i=0,1$ and for any $m\in \mathbb{Z}_{>0}$. 

For any $m\in \mathbb{Z}_{>0}$ we have the canonical monomorphism of sheaves $$\mathcal{O}(-mD)\to \mathcal{O}(-D)$$ and the short exact sequence: 
$$0\to i_{!}\mathcal{O}_{X\setminus Z}\to\mathcal{O}(-mD)\to j_{*}j^{-1}\mathcal{O}(-mD)\to 0$$

It follows the following commutative diagram: 
\[\tiny
\begin{diagram} 
\node{\Gamma(X,\mathcal{O}(-mD))=0} \arrow{e,t}{} \arrow{s,r}{}
\node{\Gamma(Z,j^{-1}\mathcal{O}(-mD))} \arrow{e,t}{} \arrow{s,r}{}
\node{H^{1}_{c}(X\setminus Z,\mathcal{O})} \arrow{s,r}{=} \arrow{e,t}{} \node{H^{1}(X,\mathcal{O}(-mD))=0}\arrow{s,r}{}
\\
\node{\Gamma(X,\mathcal{O}(-D))=0} \arrow{e,t}{}
\node{\Gamma(Z,j^{-1}\mathcal{O}(-D))} \arrow{e,t}{}
\node{H^{1}_{c}(X\setminus Z,\mathcal{O})} \arrow{e,t}{} \node{H^{1}(X,\mathcal{O}(-D))=0}
\end{diagram}\]

So, for any $m\in \mathbb{Z}_{>0}$: $\Gamma(Z,j^{-1}\mathcal{O}(-mD))=\Gamma(Z,j^{-1}\mathcal{O}(-D))$. Now, if $f\in \Gamma(Z,j^{-1}\mathcal{O}(-D))$ is non-zero, then $div(f)-mD\geq 0$ for any $m\in \mathbb{Z}_{>0}$ which is impossible. Hence $\Gamma(Z,j^{-1}\mathcal{O}(-D))=0$ and Corollary \ref{crucialcor} implies that $X\setminus Z$ admits the Hartogs phenomenon. In particular, $H^{1}_{c}(X\setminus Z,\mathcal{O})=0$ and $\Gamma (Z,j^{-1}\mathcal{O})=\mathbb{C}$.  
\end{proof}

\section{Remark on basepoint-free divisors}\label{sec4}

Let $X$ be a compact K\"ahler manifold, $D$ be an effective basepoint-free divisor such that $Z=Supp(D)$ is connected. Note that $D$ is nef. Let $\phi_{|D|}\colon X\to \mathbb{P}^{N}$ be the holomorphic map corresponding to the linear system $|D|$. Denote $Y=\phi_{|D|}(X)$. Including Ohsawa's results, we obtain the following geometrical characterizations of the Hartogs phenomenon in this case.

\begin{theorem}\label{basepointthm}
If $X$ is a compact K\"ahler manifold, $D$ is an effective basepoint-free divisor such that $Z=Supp(D)$ is connected, then the following assertions are equivalent: 

\begin{enumerate}
\item $X\setminus Z$ admits the Hartogs phenomenon;
\item $\dim Y>1$;
\item $[D]^{2}\neq 0$ in $H^{2,2}(X,\mathbb{R})$;
\item The normal bundle $\mathcal{O}(D)|_{Z}$ is semipositive and non-flat;
\item $Z$ has an $(n-1)$-concave neighbourhood system.
\end{enumerate}
\end{theorem}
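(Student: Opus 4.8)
The plan is to prove the cycle of equivalences by a combination of already-established results and elementary projective geometry. First I would observe that $D$ basepoint-free means $\mathcal{O}(D)$ is globally generated, hence the morphism $\phi_{|D|}\colon X\to\mathbb{P}^N$ is well-defined and $\mathcal{O}(D)=\phi_{|D|}^{*}\mathcal{O}_{\mathbb{P}^N}(1)$; in particular $D$ is nef, so Theorem \ref{mainthm} applies as soon as one knows $[D]^2\neq 0$. The core of the argument is therefore to relate the numerical condition $[D]^2\neq 0$ to the geometry of $Y=\phi_{|D|}(X)$ and to the curvature of the normal bundle.

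For the implication $(3)\Rightarrow(1)$ I would simply invoke Theorem \ref{mainthm}, since a basepoint-free effective divisor is automatically nef. For $(2)\Leftrightarrow(3)$ I would use the projection formula and the fact that the self-intersection number $\int_X [D]^n$ (or more precisely the top nonzero power) detects $\dim Y$: if $H$ denotes the hyperplane class on $\mathbb{P}^N$, then $[D]^k=\phi_{|D|}^{*}(H^k)$, and $H^k$ pulls back to zero in $H^{2k,2k}(X,\mathbb{R})$ precisely when $k>\dim Y$. Thus $[D]^2\neq 0$ iff $\dim Y\geq 2$, i.e. $\dim Y>1$ (note $\dim Y\geq 1$ always, since $Z$ is a nonempty effective divisor and $\mathcal{O}(D)$ is nontrivial). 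For $(3)\Leftrightarrow(4)$ I would take the Fubini–Study metric on $\mathcal{O}_{\mathbb{P}^N}(1)$ and pull it back: its curvature form $\Theta$ is semipositive everywhere on $X$, hence a fortiori semipositive on the Zariski tangent spaces of $Z$, giving the "semipositive" half of $(4)$ for free; the form $\Theta$ is non-flat on a neighbourhood of $Z$ (equivalently $\Theta|_Z\not\equiv 0$) exactly when $\phi_{|D|}$ does not contract a neighbourhood of $Z$ to a point, which is again governed by whether $[D]^2=\phi^*(H^2)$ vanishes. Conversely, if some metric on $\mathcal{O}(D)|_Z$ is semipositive and non-flat, Ohsawa's Theorem (the $\mathrm{thm}$ quoted in the introduction) gives the Hartogs phenomenon, closing the loop; and $(4)\Rightarrow(5)$ is precisely \cite[Proposition 1.2]{Ohsawa}, while $(5)\Rightarrow(1)$ follows from the vanishing-theorem plus $\bar\partial$-technique argument recalled in the introduction (an $(n-1)$-concave neighbourhood system of $Z$ forces $H^1_c(X\setminus Z,\mathcal{O})=0$, which by Lemma \ref{mainlemma} with $\mathcal{F}=\mathcal{O}_X$ and $m=\dim H^1(X,\mathcal{O}_X)$—after reducing to $\mathcal{O}(-D)$ via the Demailly–Peternell vanishing as in Corollary \ref{crucialcor}—yields the Hartogs phenomenon).

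Concretely I would organize the proof as: $(1)\Leftrightarrow(3)$ via Theorem \ref{mainthm} together with Corollary \ref{crucialcor} (the forward direction needs that failure of Hartogs forces, through the mechanism of Theorem \ref{basepointthm}(2), that $Y$ is a curve and hence $[D]^2=0$); $(2)\Leftrightarrow(3)$ by the projection-formula computation above; $(3)\Leftrightarrow(4)$ by pulling back the Fubini–Study curvature and using that non-flatness of $\phi^*\Theta$ near $Z$ is equivalent to $\phi(Z)$ not being a point; $(4)\Rightarrow(5)$ by Ohsawa's \cite[Proposition 1.2]{Ohsawa}; and $(5)\Rightarrow(1)$ by the standard $\bar\partial$ argument with the Demailly–Peternell vanishing theorem.

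The main obstacle I anticipate is the implication $(1)\Rightarrow$ (the numerical/geometric conditions), i.e. showing that if $Y$ is a curve then $X\setminus Z$ does \emph{not} admit the Hartogs phenomenon. In that case $\phi_{|D|}\colon X\to Y$ has connected fibers of dimension $n-1$ (after Stein factorization), and $Z$ is a union of such fibers over the finite set $\phi_{|D|}(Z)$; one must produce nonconstant holomorphic functions on punctured neighbourhoods of $Z$ that do not extend—these come from pulling back local coordinates on the curve $Y$ near the (finitely many) points of $\phi_{|D|}(Z)$, so that a neighbourhood of $Z$ maps onto a neighbourhood of those points in $Y$ and the Remmert reduction picture of the $\varepsilon$-neighbourhood (as in the Example, $c_1(N_Z)=0$ flat case) shows $\Gamma(Z,j^{-1}\mathcal{O})\neq\mathbb{C}$. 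Making this rigorous requires a little care about whether $\phi_{|D|}$ restricted to a neighbourhood of $Z$ is proper onto its image and about the normality of $Y$, but it should go through using that $\phi_{|D|}$ is a morphism of compact analytic spaces and $|D|$ is basepoint-free.
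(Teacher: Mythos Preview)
Your proposal is correct and follows essentially the same route as the paper. The logical skeleton---$(2)\Leftrightarrow(3)$ via the pullback of the hyperplane class, $(2)\Rightarrow(4)\Rightarrow(5)\Rightarrow(1)$ via Ohsawa's results, and the crucial backward implication $(1)\Rightarrow(2)$ by analyzing the map $\phi_{|D|}$ when $\dim Y\le 1$---matches the paper exactly.

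Two organizational differences are worth noting. First, for $(1)\Leftrightarrow(2)$ the paper is cleaner than your hands-on sketch: after Stein factorization one has a proper surjective $\phi\colon X\to Y$ onto a normal projective variety with $\phi_{*}\mathcal{O}_X=\mathcal{O}_Y$ and $D=\phi^{*}H$ for $H$ ample; then $\phi\colon X\setminus Z\to Y\setminus\mathrm{Supp}(H)$ \emph{is} the Remmert reduction, so Hartogs on $X\setminus Z$ is equivalent to Hartogs on the normal Stein space $Y\setminus\mathrm{Supp}(H)$, and the latter holds iff $\dim Y>1$ (this is the cited result of \O vrelid--Vassiliadou). This replaces your ``pull back a local coordinate from the curve'' argument and the attendant worries about properness and normality in one stroke. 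Second, the paper does \emph{not} invoke Theorem~\ref{mainthm} for $(3)\Rightarrow(1)$; once $(1)\Leftrightarrow(2)\Leftrightarrow(3)$ is established via the Remmert reduction, that implication is already in hand. Your use of Theorem~\ref{mainthm} is valid but redundant here. Likewise, your parenthetical about ``reducing to $\mathcal{O}(-D)$ via Demailly--Peternell'' in the $(5)\Rightarrow(1)$ step is unnecessary: the paper simply cites Ohsawa's theorem for the whole chain $(2)\Rightarrow(4)\Rightarrow(5)\Rightarrow(1)$, using the pulled-back Fubini--Study form as the required metric.
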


\begin{proof}
$1\Leftrightarrow 2$. The Stein factorization implies a proper surjective holomorphic map $\phi\colon X\to Y$, where $Y$ is a normal projective variety, such that $D=\phi^{*}(H)$ for an ample divisor $H$ on $Y$ and $\phi_{*}\mathcal{O}_{X}=\mathcal{O}_{Y}$. It follows that $\phi\colon X\setminus Z\to Y\setminus supp(H)$ is the Remmert reduction. It is easy to see that $X\setminus Z$ admits the Hartogs phenomenon if and only if $Y\setminus supp(H)$ admits the Hartogs phenomenon. Since $Y\setminus supp(H)$ is normal Stein, then it admits the Hartogs phenomenon if and only if $\dim Y>1$ (see, for instance, \cite{Vassiliadou}). 

$2\Leftrightarrow 3$. We have $D=\phi^{*}_{|D|}H$ for an ample divisor $H$ on $\mathbb{P}^{N}$. Assume $\dim Y>1$. Putting $\omega:=\phi_{|D|}^{*}\omega_{FS}$ (here $\omega_{FS}$ is the Fubini-Study form) we obtain $\omega^{2}$ is nonzero at some point of $Z$. It follows that $[D]^{2}\neq 0$ (via Hodge theory). If $\dim Y=0$ or $1$, then $(\omega_{FS}|_{Y^{reg}})^{2}=0$. 

The implications $2\Rightarrow 4\Rightarrow 5\Rightarrow 1$ follows from Ohsawa's theorem, because we may take $\phi_{|D|}^{*}\omega_{FS}$ as above, where $\omega_{FS}$ is the Fubini-Study form on $\mathbb{P}^{N}$. 
\end{proof}
\begin{remark}\label{rem1}
If $D$ is an effective and numerically effective but not basepoint-free divisor, then the implication $3\Rightarrow 1$ is also true by Theorem \ref{mainthm}. But the implication $1\Rightarrow 3$ is not true in general. Indeed, there is an example of a compact k\"ahler surface $X$ and smooth curve $D$ of $X$ such that $X\setminus D$ is biholomorphic to $(\mathbb{C}^{*})^{2}$, $D$ is not basepoint-free (because $\dim\Gamma (X,\mathcal{O}(mD))=1$ for any $m\in\mathbb{Z}_{>0}$) and $[D]^{2}=0$ in $H^{2,2}(X,\mathbb{R})$ (for more details, see \cite[Example 3.2]{Hartshorne} or \cite[Theorem 2]{Ueda1}). One can generalize this observation to the case where $D$ is a smooth curve of a compact surface which is of class $(\alpha)$ in the classification of $\S 5$ of \cite{Ueda2}. 

See also the paper \cite{Koike} for the relation between the Hartogs phenomenon and the structure of neighbourhoods of $D$ where $D$ is a connected non-singular hypersurface with $[D]^{2}=0$ of a connected compact K\"ahler manifold $X$ \cite[Theorem 1.4]{Koike}. It follows that if $\mathcal{O}(D)$ is semipositive and $[D]^{2}=0$, then $X\setminus D$ may admits or not the Hartogs phenomenon (see \cite[Theorem 1.1]{Koike}). Note that, if $X\setminus D$ does not admit the Hartogs phenomenon in this case, it follows that there exists a neighbourhood $V$ of $D$ such that the line bundle $\mathcal{O}(D)|_{V}$ is unitary
flat by \cite[Theorem 1.4]{Koike} (in particular, $D$ admits a fundamental system of neighbourhoods $\{V_{\varepsilon}\}_{\varepsilon>0}$ such that $\partial V_{\varepsilon}$ are Levi-flat). 

So, the basepoint-free property is important in Theorem \ref{basepointthm}.
\end{remark}
\begin{remark}\label{rem2}
If $D$ is an effective but not necessarily numerically effective divisor, then via the example due to Remark \ref{rem1} the implication $1\Rightarrow 2$ is not true in general because in this example $\dim Y=0$. The implication $ 2\Rightarrow 1$ in this case is true for compact complex manifolds with the finitely generated $\mathbb{C}$-algebra $R(X,D):=\bigoplus\limits_{m\geq 0}\Gamma(X,\mathcal{O}(mD))$. Indeed, consider the meromorphic map $\phi_{|D|}\colon X \dashrightarrow Y$ corresponding to the linear system $|D|$. Replacing $D$ by $mD$ for a suitable integer $m>0$ and using \cite[Proposition 3.1]{Nakayama}, we obtain that there exists a bimeromorphic map $\sigma\colon\widehat{X}\to X$ from a compact normal complex analytic variety $\widehat{X}$, and an effective divisor $F$ of $\widehat{X}$ such that $M:= \sigma^{*}D-F$ is basepoint-free divisor of $\widehat{X}$. The linear system $|M|$ defines a proper surjective holomorphic map $\tau\colon \widehat{X}\to Y$ such that $f\circ \sigma=\tau$ over a Zariski open subset of $\widehat{X}$ and $M=\tau^{*}H$ for an ample divisor $H$ of $Y$. Via the Stein factorization, we may assume that $Y$ is a normal projective variety and $\tau_{*}\mathcal{O}_{\widehat{X}}=\mathcal{O}_{Y}$. Let $Z'$ be the support of $\sigma^{*} D$, $Z''$ be the support of $M$. Note that $\sigma|_{\widehat{X}\setminus Z'}$ is an isomorphism via the construction of $\widehat{X}$ (see proof of \cite[Proposition 3.1]{Nakayama}). We obtain the following: 

\[\small
\begin{diagram} 
\node{X\setminus Z}  \node{\widehat{X}\setminus Z'}  \arrow{e,t}{\text{into open}}
\arrow{w,t}{\text{iso via }\sigma} 
\node{\widehat{X}\setminus Z''} \arrow{e,t}{\text{onto proper}} \node{Y\setminus Supp(H)}
\end{diagram}\]

Since $Y\setminus Supp(H)$ is a normal Stein space and $\dim Y>1$, it admits the Hartogs phenomenon \cite{Vassiliadou}. Since $\tau|_{\widehat{X}\setminus Z''}$ is proper surjective, then $\widehat{X}\setminus Z''$ also admits the Hartogs phenomenon. Since $\widehat{X}\setminus Z'$ is an open complex submanifold of $\widehat{X}\setminus Z''$, it follows that $\widehat{X}\setminus Z'$ also admits the Hartogs phenomenon. Indeed, for the open complex submanifolds we have the following claim:

\begin{claim}
Let $X'$ be a connected compact complex manifold, $W\subset X\subset X'$ are connected open complex submanifolds. If $X$ admits the Hartogs phenomenon, then $W$ is too. 
\end{claim}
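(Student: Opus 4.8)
The plan is to verify the Hartogs phenomenon for $W$ straight from the definition. Fix a compact set $K\subset W$ with $W\setminus K$ connected and a section $f\in\Gamma(W\setminus K,\mathcal{O})$. Injectivity of $\Gamma(W,\mathcal{O})\to\Gamma(W\setminus K,\mathcal{O})$ is automatic since $W$ is connected, so the only point is to extend $f$ holomorphically across $K$.

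First I would enlarge the hole. Writing $\widehat{K}:=K\cup(X\setminus W)$, which is closed in $X$, one has $X\setminus\widehat{K}=W\setminus K$, so $f$ is holomorphic on $X\setminus\widehat{K}$ and $X\setminus\widehat{K}$ is connected. When $X\setminus W$ is compact — as it is in the situations where the Claim is applied, where it is a compact analytic set — the set $\widehat{K}$ is compact, and the Hartogs phenomenon for $X$ produces $F\in\Gamma(X,\mathcal{O})$ with $F|_{W\setminus K}=f$; then $F|_{W}$ is the required extension, and together with the injectivity above this proves the Claim.

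If $X\setminus W$ is merely closed, one cannot apply the Hartogs property of $X$ in a single step (holomorphic functions need not extend across non-compact closed sets), and I would instead pass to the cohomological criterion contained in the proof of Lemma \ref{mainlemma}: a connected non-compact complex manifold $M$ admits the Hartogs phenomenon if and only if $H^{1}_{c}(M,\mathcal{O})\to H^{1}(M,\mathcal{O})$ is injective. The obstruction to extending $f$ across $K$ is a class in $H^{1}_{K}(W,\mathcal{O}_{W})\cong H^{1}_{K}(X,\mathcal{O}_{X})$ (excision, valid because $K$ is compact, hence closed, in both $W$ and $X$), which one pushes into $H^{1}_{c}(X,\mathcal{O}_{X})$; injectivity of $H^{1}_{c}(X,\mathcal{O}_{X})\to H^{1}(X,\mathcal{O}_{X})$ kills it there, and connectedness of $W\setminus K$ then turns this vanishing into an actual holomorphic extension of $f$ to $W$, just as in the last paragraph of the proof of Lemma \ref{mainlemma}. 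The step I expect to be the crux is exactly this general case: chasing the classes through the relevant long exact sequences (local cohomology with support in $K$, with support in $X\setminus W$, and compactly supported cohomology) so that the injectivity coming from $X$ genuinely descends to $W$ — in the compact-$X\setminus W$ case it is transparent, but in general it must be combined carefully with the excision isomorphism and the connectedness hypotheses.
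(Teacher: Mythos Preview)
Your first, elementary route (enlarge $K$ to $\widehat K=K\cup(X\setminus W)$ and invoke Hartogs for $X$) is fine when $X\setminus W$ is compact, but your assertion that this is the case in the paper's application is incorrect. There $W=\widehat X\setminus Z'$ and $X=\widehat X\setminus Z''$ with $Z''=\operatorname{Supp}(M)\subset Z'=\operatorname{Supp}(\sigma^{*}D)$, so $X\setminus W=Z'\setminus Z''$ is an \emph{open} (typically non-closed) subset of the compact set $Z'$ and is in general not compact. So the general case really must be handled.

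For that general case your cohomological sketch has a genuine gap. The criterion ``Hartogs for $X$ $\Leftrightarrow$ $H^{1}_{c}(X,\mathcal O)\to H^{1}(X,\mathcal O)$ injective'' is correct, but injectivity does not ``kill'' anything: it only tells you that a class vanishing in $H^{1}(X,\mathcal O)$ already vanished in $H^{1}_{c}(X,\mathcal O)$. For $f\in\Gamma(W\setminus K,\mathcal O)$ the obstruction $\delta(f)\in H^{1}_{K}(W,\mathcal O)\cong H^{1}_{K}(X,\mathcal O)$ has no reason to map to $0$ in $H^{1}(X,\mathcal O)$; equivalently, under the \v Cech cover $\{W,\,X\setminus K\}$ the class of $f$ in $H^{1}(X,\mathcal O)$ is trivial precisely when $f$ can be written as $g|_{W\setminus K}-h|_{W\setminus K}$ with $g\in\Gamma(W,\mathcal O)$, $h\in\Gamma(X\setminus K,\mathcal O)$ --- which is essentially what you are trying to prove. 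No amount of diagram chasing with local cohomology supported in $K$ and in $X\setminus W$ produces this vanishing from the hypotheses alone.

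The ingredient you are missing is exactly the compact ambient manifold $X'$, which you never use. The paper's argument runs as follows: from Hartogs for $X$ one first shows (by gluing across $X\setminus K$) that $\Gamma(X',\mathcal O)=\mathbb C\to\Gamma(X'\setminus K,\mathcal O)$ is an isomorphism, hence $H^{1}_{K}(X',\mathcal O)\hookrightarrow H^{1}(X',\mathcal O)$. Since $X'$ is compact, $H^{1}(X',\mathcal O)$ is finite-dimensional, and therefore so is $H^{1}_{K}(X',\mathcal O)\cong H^{1}_{K}(W,\mathcal O)$. One then concludes Hartogs for $W$ not by showing $\delta(f)=0$ directly, but via the Andreotti--Hill polynomial trick (the $3\Rightarrow1$ step of Lemma~\ref{mainlemma}), which needs only finite-dimensionality of $H^{1}_{K}(W,\mathcal O)$. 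That is the missing idea.
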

\begin{proof}
Let $K\subset W$ be a compact set such that $W\setminus K$ is connected. Since $X, W, W\setminus K$ are connected, it follows that $X\setminus K$ is also connected. Consider the following commutative diagram:

\[\tiny
\begin{diagram} 
\node{\Gamma(X', \mathcal{O})=\mathbb{C}} \arrow{e,t}{r_{2}} \arrow{s,r}{}
\node{\Gamma(X'\setminus K, \mathcal{O})} \arrow{e,t}{} \arrow{s,r}{}
\node{H^{1}_{K}(X',\mathcal{O})} \arrow{e,t}{} \arrow{s,r}{\cong} 
\node{H^{1}(X',\mathcal{O})}
\\
\node{\Gamma(X, \mathcal{O})} \arrow{e,t}{r_{1}} \arrow{s,r}{}
\node{\Gamma(X\setminus K, \mathcal{O})} \arrow{e,t}{} \arrow{s,r}{}
\node{H^{1}_{K}(X,\mathcal{O})} \arrow{s,r}{\cong} 
\\
\node{\Gamma(W, \mathcal{O})} \arrow{e,t}{r_{3}} 
\node{\Gamma(W\setminus K, \mathcal{O})} \arrow{e,t}{} 
\node{H^{1}_{K}(W,\mathcal{O})}
\end{diagram}
\]

Since $r_{1}$ is an isomorphism, then $r_{2}$ is too. Since $\dim_{\mathbb{C}}H^{1}(X',\mathcal{O})<\infty$ it follows that $\dim_{\mathbb{C}} H^{1}_{K}(W,\mathcal{O})<\infty$ which implies that $r_{3}$ is an isomorphism, as in the proof of the implication $3\Rightarrow 1$ of Lemma \ref{mainlemma}.
\end{proof}

Note that, the above arguments work only for the case of finitely generated $\mathbb{C}$-algebras $R(X,D)$; the general case is more complicated and one naturally leads to the Zariski decomposition problem (see \cite[Chapter III]{Nakayama}). 
\end{remark}

\section{Examples}\label{sec5}

In this section we consider some divisors on toric varieties (about toric varieties see, for instance, \cite{Cox}). 

Let $X$ be a projective toric manifold with dense torus $T$. Let $M$ be the lattice of characters of torus $T$ and $M_{\mathbb{R}}=M\otimes_{\mathbb{Z}}\mathbb{R}$, $N:=Hom_{\mathbb{Z}}(M,\mathbb{Z})$ be the lattice of 1-parametric subgroups and $N_{\mathbb{R}}=N\otimes_{\mathbb{Z}}\mathbb{R}$. Let $\Sigma$ be the fan corresponding to $X$. 

Let $f=\sum\limits_{m\in M}a_{m}\chi_{m} \in \mathbb{C}[T]$ be a Laurent polynomial with the Newton polytope $P_{f}:=Conv (m\in M_{\mathbb{R}}\mid a_{m}\neq 0)\subset M_{\mathbb{R}}$. We obtain a rational function $f\in \mathbb{C}(X)$ and let $D=\overline{div(f|_{T})}$ (i.e. the closure in $X$ of zero-divisor of $f|_{T}$) and $D_{\infty}:=D-div(f)$. 

Each ray $\rho_{i}$ of $\Sigma$ gives a minimal ray generator $u_{i}\in N$ and $T$-invariant prime divisor $D_{i}$ of $X_{\Sigma}$.

Let $l_{f}$ be the \textit{support function of the polytope} $P_{f}$, i.e. $l_{f}\colon N_{\mathbb{R}}\to\mathbb{R}, l_{f}(u):=\min\limits_{m\in P_{f}}\langle u,m\rangle.$ Then $D_{\infty}=-\sum l_{f}(u_{i})D_{i}$ \cite{Khov}. 

Note that $D_{\infty}$ is basepoint-free if and only if $D_{\infty}$ is nef (which is equivalent to $D_{\infty}\cdot C\geq 0$ for all $T$-invariant irreducible complete curves $C \subset X$) \cite[Theorem 6.3.12]{Cox}. Also 
there are convex-geometric criteria of nefness in the term of \textit{support function of the divisor} $D_{\infty}$, i.e. $\phi_{D_{\infty}}\colon N_{\mathbb{R}}\to\mathbb{R}$ which is linear on each cone $\sigma\in\Sigma$ and $\phi_{D_{\infty}}(u_{i})=l_{f}(u_{i})$ (see, for instance, \cite[Theorem 6.1.7, Theorem 6.4.9]{Cox}). 

Now we recall the cohomology ring of projective toric manifolds (Jurkiewicz-Danilov theorem, see, for instance, \cite[Theorem 12.4.4]{Cox}). Let $\{\rho_{i}\}_{i=1}^{r}$ be the set of rays of $\Sigma$. In the polynomial ring $\mathbb{Z}[x_{1},\cdots,x_{r}]$, let $\mathcal{I}_{1}$ be the monomial ideal with square-free generators as follows: $$\mathcal{I}_{1}=\langle x_{i_1}\cdots x_{i_{s}}\mid i_{j} \text{ are distinct and } \sum\limits_{j=1}^{s} \rho_{i_{j}} \text{ is not a cone of } \Sigma\rangle.$$
Also, let $\mathcal{I}_{2}$ be the ideal generated by the linear forms $\sum\limits_{i=1}^{r}\langle m, u_{i}\rangle x_{i}$ where $m$ ranges over $M$. 
Then $x_{i}\to [D_{i}]\in H^{2}(X,\mathbb{Z})$ induces a ring isomorphism $$\mathbb{Z}[x_{1},\cdots,x_{r}]/(\mathcal{I}_{1}+\mathcal{I}_{2})\cong H^{*}(X,\mathbb{Z}).$$ 

\begin{corollary}
Let $X, f, D_{\infty}$ are as above and assume that $D_{\infty}$ is effective and nef. Then $X\setminus \overline{\{f=0\}}$ admits the Hartogs phenomenon if and only if $[D_{\infty}]^{2}\neq 0$ in $H^{*}(X,\mathbb{R})$.
\end{corollary}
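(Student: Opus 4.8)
The plan is to reduce to Theorem~\ref{basepointthm} by replacing $D_{\infty}$ with the linearly equivalent divisor $D:=\overline{\{f=0\}}=\overline{div(f|_{T})}$ itself, which turns out to be effective and basepoint-free. Since $f$ is a Laurent polynomial, $f|_{T}$ is regular on $T$, so $D$ is effective; and by definition $D-D_{\infty}=div(f)$ is principal, hence $D\sim D_{\infty}$. In particular $[D]=[D_{\infty}]$ in $H^{2}(X,\mathbb{R})$, so $[D]^{2}\neq 0$ iff $[D_{\infty}]^{2}\neq 0$ in $H^{2,2}(X,\mathbb{R})$. Being nef is a numerical property, so $D$ is nef because $D_{\infty}$ is; and since $X$ is a smooth complete toric variety every Weil divisor is Cartier and a nef divisor is basepoint-free (\cite[Theorem 6.3.12]{Cox}). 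Thus $D$ is an effective basepoint-free divisor with $Supp(D)=\overline{\{f=0\}}$.

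Next I would run the Stein-factorization argument from the proof of Theorem~\ref{basepointthm}. For a suitable $m>0$ the linear system $|mD|$ yields, after Stein factorization, a proper surjective holomorphic map $\phi\colon X\to Y$ with connected fibres onto a normal projective (hence irreducible) variety $Y$ with $\phi_{*}\mathcal{O}_{X}=\mathcal{O}_{Y}$ and $mD=\phi^{*}H$ for an ample divisor $H$ on $Y$. Then $Z:=Supp(D)=\phi^{-1}(Supp(H))$ and $X\setminus Z=\phi^{-1}(Y\setminus Supp(H))$, which is connected, being the preimage of the irreducible set $Y\setminus Supp(H)$ under a map with connected fibres; thus the Hartogs phenomenon for $X\setminus Z$ is meaningful even when $Z$ is disconnected, and $\phi$ restricts to the Remmert reduction of $X\setminus Z$ onto the normal Stein space $Y\setminus Supp(H)$. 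As in Theorem~\ref{basepointthm}, $X\setminus Z$ admits the Hartogs phenomenon iff $Y\setminus Supp(H)$ does, iff $\dim Y>1$ (\cite{Vassiliadou}); and the same Hodge-theoretic argument as in Theorem~\ref{basepointthm} (implication $2\Leftrightarrow 3$), applied to $\omega=\phi_{|mD|}^{*}\omega_{FS}$, gives $\dim Y>1$ iff $[mD]^{2}\neq 0$, i.e. iff $[D]^{2}\neq 0$. Chaining these equivalences with $[D]^{2}\neq 0\Leftrightarrow[D_{\infty}]^{2}\neq 0$ yields the corollary; in the nontrivial direction one could instead invoke Theorem~\ref{mainthm}, since $[D_{\infty}]^{2}\neq 0$ forces $\dim Y>1$, hence $Z=\phi^{-1}(Supp(H))$ connected.

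This is routine once set up; the only points needing a word are (i) transferring nefness, basepoint-freeness and the self-intersection number from $D_{\infty}$ to $D$ via $D\sim D_{\infty}$, and (ii) noting that the Theorem~\ref{basepointthm} argument does not require $\overline{\{f=0\}}$ to be connected, because $X\setminus\overline{\{f=0\}}$ is automatically connected. I would also remark that the condition $[D_{\infty}]^{2}\neq 0$ is now concretely checkable via the Jurkiewicz--Danilov presentation $\mathbb{Z}[x_{1},\dots,x_{r}]/(\mathcal{I}_{1}+\mathcal{I}_{2})\cong H^{*}(X,\mathbb{Z})$: since $[D_{\infty}]=-\sum l_{f}(u_{i})[D_{i}]$, one just tests whether the image of $\bigl(\sum_{i} l_{f}(u_{i})x_{i}\bigr)^{2}$ vanishes in the quotient ring.
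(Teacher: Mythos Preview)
Your approach is correct and is exactly the one the paper intends: the corollary is stated without proof because, as noted just above it, on a smooth projective toric variety nef is equivalent to basepoint-free \cite[Theorem 6.3.12]{Cox}, so one simply applies Theorem~\ref{basepointthm}. You make explicit two points the paper leaves implicit: first, that one must pass from $D_{\infty}$ to the linearly equivalent divisor $D=\overline{div(f|_{T})}$, since it is $Supp(D)=\overline{\{f=0\}}$ (not $Supp(D_{\infty})\subset X\setminus T$) whose complement is at stake; and second, that the connectedness hypothesis on $Z$ in Theorem~\ref{basepointthm} is not needed for the equivalence $1\Leftrightarrow 2\Leftrightarrow 3$, because the Remmert-reduction/Stein-factorization argument only uses that $X\setminus Z$ is connected, which you verify via $X\setminus Z=\phi^{-1}(Y\setminus Supp(H))$. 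Both observations are sound, and the remainder (transferring nefness and $[D]^{2}$ along $D\sim D_{\infty}$, the Jurkiewicz--Danilov computation) is routine. One small simplification: since $D$ is already basepoint-free you may take $m=1$ rather than passing to $|mD|$.
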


We apply above to the Hirzebruch surface $H_{r}$ (about Hirzebruch surfaces in toric context, see \cite[Example 3.1.16]{Cox}). Fix a basis $\{e^{1},e^{2}\}$ of $M$ and $\{e_{1},e_{2}\}$ be the dual basis of $N$. In this case, we have torus-invariant divisors $D_{1},D_{2},D_{3},D_{4}$ which correspond to the rays $\rho_{1}=\mathbb{R}_{\geq 0}\langle-e_1+re_{2}\rangle, \rho_{2}=\mathbb{R}_{\geq 0}\langle e_{2}\rangle, \rho_{3}=\mathbb{R}_{\geq 0}\langle e_1\rangle, \rho_{4}=\mathbb{R}_{\geq 0}\langle-e_{2}\rangle$. The cohomology ring (over $\mathbb{R}$) of $X$ is the following: 
$$ H^{*}(X,\mathbb{R})\cong \mathbb{R}[x_{1},x_{2},x_{3},x_{4}]/\langle x_{1}x_{3},x_{2}x_{4},-x_{1}+x_{3},rx_{1}+x_{2}-x_{4}\rangle.$$ 

Note that $[D_{2}]=[D_{4}]-r[D_{3}], [D_{1}]=[D_{3}], [D_{3}]^{2}=0, [D_{3}]\cdot [D_{4}]=1, [D_{4}]^{2}=r$. 

Let $f=\sum\limits_{m\in\mathbb{Z}^{2}}a_{m}z^{m}\in\mathbb{C}[T]=\mathbb{C}[z_{1},z_{1}^{-1},z_{2},z_{2}^{-1}]$ be a Laurent polynomial, $l_{f}$ be the corresponding support function of the Newton polytope $P_{f}$ of $f$. So, we obtain 
$$[D_{\infty}]=-\sum_{i=1}^{4} l_{f}(u_{i})[D_{i}]=(-l_{f}(u_{1})-l_{f}(u_{3})+rl_{f}(u_{2}))[D_{3}]+(-l_{f}(u_{2})-l_{f}(u_{4}))[D_{4}].$$

Computing $[D_{\infty}]\cdot [D_{3}]$ and $[D_{\infty}]\cdot [D_{4}]$ we obtain the following:
\begin{claim}
$D_{\infty}$ is nef and effective if and only if $l_{f}(u_{i})\leq 0$ for all $i=1,2,3,4$ and $l_{f}(u_{1})+l_{f}(u_{3})-rl_{f}(u_{4})\leq 0$
\end{claim}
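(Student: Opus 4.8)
The plan is to translate the two numerical conditions ``$D_\infty$ is nef'' and ``$D_\infty$ is effective'' into explicit inequalities on the values $l_f(u_i)$ by intersecting $[D_\infty]$ against the two generators $[D_3]$ and $[D_4]$ of $H^2(X,\mathbb{R})$, and against the curve classes they correspond to. First I would recall from the description just above that $D_\infty = -\sum_{i=1}^4 l_f(u_i)D_i$ and that in $H^*(X,\mathbb{R})$ we have the relations $[D_2]=[D_4]-r[D_3]$, $[D_1]=[D_3]$, together with $[D_3]^2=0$, $[D_3]\cdot[D_4]=1$, $[D_4]^2=r$; substituting these gives the expression for $[D_\infty]$ already displayed, namely $[D_\infty]=(-l_f(u_1)-l_f(u_3)+rl_f(u_2))[D_3]+(-l_f(u_2)-l_f(u_4))[D_4]$.

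Next I would compute the two relevant intersection numbers. Using the relations above, $[D_\infty]\cdot[D_3] = -l_f(u_2)-l_f(u_4)$ and $[D_\infty]\cdot[D_4] = (-l_f(u_1)-l_f(u_3)+rl_f(u_2)) + r(-l_f(u_2)-l_f(u_4)) = -l_f(u_1)-l_f(u_3)-rl_f(u_4)$. On a toric surface the $T$-invariant irreducible complete curves are exactly the $D_i$, and nefness of $D_\infty$ is equivalent to $D_\infty\cdot D_i\ge 0$ for all $i$ by \cite[Theorem 6.3.12]{Cox}; since $[D_1]=[D_3]$ and $[D_2]=[D_4]-r[D_3]$ the four conditions reduce to the two inequalities $-l_f(u_2)-l_f(u_4)\ge 0$ and $-l_f(u_1)-l_f(u_3)-rl_f(u_4)\ge 0$, i.e. $l_f(u_1)+l_f(u_3)-r\,(-l_f(u_4))\ge 0$, which after sign bookkeeping is $l_f(u_1)+l_f(u_3)-rl_f(u_4)\le 0$ together with $l_f(u_2)+l_f(u_4)\le 0$ — matching (part of) the claimed condition.

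Then I would handle effectiveness: $D_\infty=-\sum l_f(u_i)D_i$ is effective precisely when every coefficient $-l_f(u_i)$ is $\ge 0$, i.e. $l_f(u_i)\le 0$ for all $i=1,2,3,4$. Combining, $D_\infty$ is nef and effective iff $l_f(u_i)\le 0$ for all $i$ and $l_f(u_1)+l_f(u_3)-rl_f(u_4)\le 0$; note the inequality $l_f(u_2)+l_f(u_4)\le 0$ is then automatic from $l_f(u_2)\le 0$ and $l_f(u_4)\le 0$, so it need not be listed separately, which is why the claim records only the single extra inequality. This proves the claim.

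The only genuinely delicate point is the sign and index bookkeeping: one must keep straight that $D_\infty=-\sum l_f(u_i)D_i$ carries a global minus sign, that $l_f$ is a min (hence typically nonpositive on the relevant rays), and that the curve $D_2$ has class $[D_4]-r[D_3]$ so that $D_\infty\cdot D_2\ge0$ contributes the combination $-l_f(u_1)-l_f(u_3)-rl_f(u_4)\ge 0$ rather than something involving $u_2$ alone. Everything else is a direct substitution into the Jurkiewicz--Danilov presentation of $H^*(X,\mathbb{R})$ recalled above, so I expect no real obstacle beyond this routine but error-prone arithmetic.
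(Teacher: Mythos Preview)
Your approach is exactly the one the paper sketches (the paper's entire proof is the phrase ``Computing $[D_\infty]\cdot[D_3]$ and $[D_\infty]\cdot[D_4]$''), but your execution contains a genuine gap.

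The error is in the step ``since $[D_1]=[D_3]$ and $[D_2]=[D_4]-r[D_3]$ the four conditions reduce to the two inequalities'' coming from $D_3$ and $D_4$. This reduction goes the wrong way: because $[D_4]=[D_2]+r[D_3]$ is a \emph{nonnegative} combination of $[D_2]$ and $[D_3]$ (while $[D_2]=[D_4]-r[D_3]$ is not a nonnegative combination of $[D_3],[D_4]$), the Mori cone of $H_r$ is generated by $[D_2]$ and $[D_3]$, not by $[D_3]$ and $[D_4]$. So the two nefness inequalities one must actually check are $D_\infty\cdot D_3\ge 0$ and $D_\infty\cdot D_2\ge 0$. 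Computing the latter gives
\[
D_\infty\cdot D_2=(-l_f(u_1)-l_f(u_3)+rl_f(u_2))\cdot 1+(-l_f(u_2)-l_f(u_4))\cdot(r-r)=-l_f(u_1)-l_f(u_3)+rl_f(u_2),
\]
so the nontrivial extra condition is $l_f(u_1)+l_f(u_3)-rl_f(u_2)\le 0$, involving $u_2$, not $u_4$.

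There is also a sign slip in your ``bookkeeping'': $-l_f(u_1)-l_f(u_3)-rl_f(u_4)\ge 0$ is equivalent to $l_f(u_1)+l_f(u_3)+rl_f(u_4)\le 0$, with a plus, not a minus, in front of $rl_f(u_4)$. In fact this inequality is automatically satisfied once all $l_f(u_i)\le 0$, so the condition you end up recording would be vacuous under effectiveness, and your argument as written would prove that every effective $D_\infty$ on $H_r$ is nef, which is false. (Your closing remark that $D_\infty\cdot D_2\ge 0$ ``contributes the combination $-l_f(u_1)-l_f(u_3)-rl_f(u_4)\ge 0$'' repeats the same confusion.)

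In short: the method is right, but you must intersect with $D_2$, not $D_4$, and the correct extra inequality is $l_f(u_1)+l_f(u_3)-rl_f(u_2)\le 0$. The statement as printed in the paper appears to carry the same $u_4$/$u_2$ typo you were aiming to match.
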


Computation \begin{multline*}
[D_{\infty}]^{2}= 2(l_{f}(u_{1})+l_{f}(u_{3})-rl_{f}(u_{2}))(l_{f}(u_{2})+l_{f}(u_{4}))+r(l_{f}(u_{2})+l_{f}(u_{4}))^{2}=\\= (2l_{f}(u_{1})+2l_{f}(u_{3}) -rl_{f}(u_{2})+rl_{f}(u_{4}))(l_{f}(u_{2})+l_{f}(u_{4})),
\end{multline*}

gives us the following: 
\begin{claim}
$[D_{\infty}]^{2}\neq 0$ if and only if $l_{f}(u_{2})+l_{f}(u_{4})\neq 0$ and $2(l_{f}(u_{1})+l_{f}(u_{3}))-r(l_{f}(u_{2}))-l_{f}(u_{4}))\neq 0$. 
\end{claim}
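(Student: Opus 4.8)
The plan is to read off $[D_{\infty}]^{2}$ from the explicit expansion already displayed just above the claim and then factor it. From the Jurkiewicz–Danilov presentation we have the relations $[D_{1}]=[D_{3}]$, $[D_{2}]=[D_{4}]-r[D_{3}]$, together with the intersection numbers $[D_{3}]^{2}=0$, $[D_{3}]\cdot[D_{4}]=1$, $[D_{4}]^{2}=r$. Writing $a:=-l_{f}(u_{1})-l_{f}(u_{3})+rl_{f}(u_{2})$ and $b:=-l_{f}(u_{2})-l_{f}(u_{4})$, the formula $[D_{\infty}]=a[D_{3}]+b[D_{4}]$ gives $[D_{\infty}]^{2}=2ab\,[D_{3}]\cdot[D_{4}]+b^{2}[D_{4}]^{2}=2ab+rb^{2}=b(2a+rb)$, which after substituting back the values of $a$ and $b$ is exactly the displayed two-line computation. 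So $[D_{\infty}]^{2}=0$ in $H^{*}(X,\mathbb{R})$ precisely when the product $b(2a+rb)$ vanishes.

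First I would expand $(a[D_{3}]+b[D_{4}])^{2}=a^{2}[D_{3}]^{2}+2ab[D_{3}]\cdot[D_{4}]+b^{2}[D_{4}]^{2}$ and plug in the three intersection numbers to get $b(2a+rb)$ as above. Next I would simplify the two factors: $b=-l_{f}(u_{2})-l_{f}(u_{4})$, so $b\neq 0 \iff l_{f}(u_{2})+l_{f}(u_{4})\neq 0$; and $2a+rb = 2(-l_{f}(u_{1})-l_{f}(u_{3})+rl_{f}(u_{2}))+r(-l_{f}(u_{2})-l_{f}(u_{4})) = -2l_{f}(u_{1})-2l_{f}(u_{3})+rl_{f}(u_{2})-rl_{f}(u_{4})$, which is the negative of $2(l_{f}(u_{1})+l_{f}(u_{3}))-r l_{f}(u_{2})+r l_{f}(u_{4})$; hence $2a+rb\neq 0$ iff $2(l_{f}(u_{1})+l_{f}(u_{3}))-r(l_{f}(u_{2})-l_{f}(u_{4}))\neq 0$. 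Finally, since $H^{4}(X,\mathbb{R})\cong\mathbb{R}$ is one-dimensional with basis $[D_{3}]\cdot[D_{4}]=1$, the class $[D_{\infty}]^{2}$ is nonzero exactly when the scalar $b(2a+rb)$ is nonzero, and a product of two real numbers is nonzero iff both factors are; this yields the stated conjunction.

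The only mild subtlety is bookkeeping of signs in matching $2a+rb$ with the exact form written in the claim statement; everything else is a one-line polynomial identity in $H^{*}(X,\mathbb{R})$. There is no serious obstacle: the whole argument is a direct substitution into the cohomology ring already set up, together with the elementary observation that a real number is zero iff it is zero in the one-dimensional top cohomology and that $xy\neq 0\iff x\neq 0$ and $y\neq 0$.
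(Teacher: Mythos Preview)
Your proposal is correct and follows exactly the paper's approach: the paper's ``proof'' is precisely the displayed computation of $[D_{\infty}]^{2}$ immediately preceding the claim, which you reproduce (with the convenient abbreviations $a,b$) and factor as $b(2a+rb)$, then read off the two nonvanishing conditions. Your remark about the sign bookkeeping is well placed --- the paper's own claim statement has a parenthesis typo, and your version $2(l_{f}(u_{1})+l_{f}(u_{3}))-r\bigl(l_{f}(u_{2})-l_{f}(u_{4})\bigr)\neq 0$ is what the displayed factorization actually gives.
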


\end{document}